\newtheorem{theorem}{Theorem}[section]
\newtheorem{lemma}[theorem]{Lemma}
\newtheorem{proposition}[theorem]{Proposition}
\theoremstyle{definition}
\newtheorem{definition}[theorem]{Definition}
\theoremstyle{remark}
\newtheorem{remark}[theorem]{Remark}
\newtheorem{notations}[theorem]{Notations}
\newcommand{\ic}{\ensuremath{\mathcal{I}}}
\newcommand{\oc}{\ensuremath{\mathcal{O}}}
\newcommand{\nc}{\ensuremath{\mathcal{N}}}
\newcommand{\Pq}{\mathbb{P}^4}
\newcommand{\Psx}{\mathbb{P}^6}
\newcommand{\Pn}{\mathbb{P}^n}
\newcommand{\Ps}{\mathbb{P}}
\newcommand{\bC}{\mathbb{C}}
\newcommand{\bK}{\mathbb{K}}
\DeclareMathOperator{\Proj}{Proj} 
\DeclareMathOperator{\Ext}{Ext} 
\DeclareMathOperator{\Hom}{Hom}
\DeclareMathOperator{\Ann}{Ann}
\DeclareMathOperator{\Ass}{Ass}
\def\bin #1#2 {\left( \matrix { #1 \cr #2 \cr } \right) }
\begin{document}

\title[ self-duality of the local cohomology  and Gherardelli's Theorem]
{ self-duality of the local cohomology of the Jacobian ring and Gherardelli's Theorem}

\author{Davide Franco }
\address{Universit\`a di Napoli
\lq\lq Federico II\rq\rq, 
Dipartimento di Matematica e
Applicazioni \lq\lq R. Caccioppoli\rq\rq,  Via
Cintia, 80126 Napoli, Italy} \email{davide.franco@unina.it}

\abstract  We prove that the $0$-th local cohomology of the jacobian ring of a projective hypersurface with isolated singularities has
a nice interpretation  it in the context of linkage theory. Roughly speaking, it represents a measure of the failure of Gherardelli's theorem for the corresponding graded modules.
This leads us to a different and characteristic free proof of its self-duality, which turns out to be an easy consequence of Grothendieck's local  duality theorem.

\bigskip\noindent {\it{Keywords}}: Linkage, Gherardelli's theorem, Jacobian ring, local cohomology, Grothendieck's local  duality theorem

\medskip\noindent {\it{MSC2010}}\,:   14B15;  14B05; 14H50

\endabstract
\maketitle

\bigskip
\section{Introduction}
\bigskip

In this paper we focus on the self-duality of the $0$-th local cohomology of the jacobian ring of a projective hypersurface with isolated singularities. Our main aim is to interpret it in the context of linkage theory.
Consider the polynomial ring  $ \bC [x_0, \dots , x_n]$  in n+1 variables, $n\geq 2$, with coefficients in the complex field $\bC$. Given a reduced polynomial $f\in \bC [x_0, \dots , x_n]$, homogeneous of degree $d$, the jacobian ring of $f$ 
is defined as
$$
 R=R(f):= \bC [x_0, \dots , x_n]\slash J(f),
$$
where 
$J(f)$ is the gradient ideal of $f$.
If $X=V(f)$, the hypersurface  defined by $f$, is nonsingular then the gradient ideal $J(f)$ is generated by a regular sequence hence $R$ is a \textit{Gorenstein artinian ring} with socle degree
$\sigma:= (n+1)(d-2)$. In particular, the main feature of the jacobian ring of a nonsingular hypersurface is 
Macaulay's theorem, that is to say the self-duality of $R(f)$:
\begin{equation}\label{Macaulay}
R(f)_k \cong R(f)_{\sigma - k }^{\vee}.
\end{equation}
As highlighted by the fundamental work of Griffiths and his school, $R$ encodes many properties of the geometry and the cohomology of $X$ as well as of his period map (cf. \cite{Griffiths}, \cite{Transc} and references therein).
For instance, in the seminal paper \cite{Griffiths}, Griffiths proved that  the jacobian ring determines the Hodge decomposition of the primitive cohomology in middle dimension.

Assume now that the hypersurface defined by $f$ is singular but reduced. In this case the jacobian ring $R(f)$ is no longer of finite length.
In the beautiful paper \cite{Sernesi}, E. Sernesi observed that \textit{most properties of the jacobian ring $R(f)$ in the nonsingular case are transferred to the local cohomology module $H^0_{\bf{m}}(R(f)) $ if $X$ is a singular hypersurface.}
The main ingredient in the study of the local cohomology module $H^0_{\bf{m}}(R(f)) $ is its interpretation by means logarithmic vector fields along  $X$. Specifically, if one denotes by $T_{\Pn} $ the tangent sheaf of $\Pn$, then one can define a subsheaf 
$T\langle X\rangle $ of $T_{\Pn} $, whose sections are vector fields on $\Pn$ with logarithmic behavior along $X$. There is an identification \cite{Sernesi}[Proposition 2.1]:
$$H^0_{\bf{m}}(R(f)) \cong H^1_{*}(T\langle X\rangle(-d)).$$

One of the main results of \cite{Sernesi}, is the self-duality of the above module $H^0_{\bf{m}}(R(f))$ when the hypersurface $X$ has \textit{only isolated singularities} \cite{Sernesi}[Theorem 3.4]:
\begin{equation}\label{self}
H^0_{\bf{m}}(R(f))_k \cong H^0_{\bf{m}}(R(f))_{\sigma - k}^{\vee },
\end{equation}
which is a sort of generalization of Macaulay's Theorem (\ref{Macaulay}).
The isomorphism above is a consequence of the following more general result (compare with \cite{Sernesi}[Theorem 3.2]):

\begin{theorem}[Sernesi]
\label{Sernesi} Let $I=(f_0, \dots , f_n)\subset \bC [x_0, \dots , x_n]$ be an ideal generated by $n+1$ homogeneous polynomials, of degrees $d_0, \dots , d_n$. Assume that $\dim (\Proj( \bC [x_0, \dots , x_n]\slash I))\leq 0$.
Then there is a natural isomorphism:
$$H^0_{\bf{m}}(\bC [x_0, \dots , x_n]\slash I) \cong [H^0_{\bf{m}}(\bC [x_0, \dots , x_n]\slash I)(\sigma)]^{\vee },
$$ where $\sigma:=\sum_0^n d_i -n-1$.
\end{theorem}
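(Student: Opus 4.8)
The plan is to deduce the statement from graded local duality over the polynomial ring $S=\bC[x_0,\dots,x_n]$, combined with the self-duality of the Koszul complex on $f_0,\dots,f_n$. Write $A=S/I$, $\mathbf m=(x_0,\dots,x_n)$, $D=\sum_{i=0}^n d_i$ (so $\sigma=D-n-1$), and recall $\omega_S=S(-n-1)$. Since $\dim\Proj(A)\le 0$ we have $\dim A\le 1$, and because $S$ is Cohen--Macaulay this gives $\operatorname{grade}(I)=(n+1)-\dim A\ge n$. Graded local duality yields $H^0_{\mathbf m}(A)^{\vee}\cong \Ext^{n+1}_S(A,\omega_S)=\Ext^{n+1}_S(A,S)(-n-1)$, so the theorem is equivalent to the single isomorphism $\Ext^{n+1}_S(A,S)\cong H^0_{\mathbf m}(A)(D)$; granting this, the displayed formula follows by reinserting $\omega_S$ and dualizing once more, using $\sigma+n+1=D$ and that the graded dual is involutive on finite length modules. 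Note that local duality already forces $\Ext^{n+1}_S(A,S)\cong H^0_{\mathbf m}(A)^{\vee}(n+1)$ to be of finite length, with total dimension over $\bC$ equal to $\ell:=\operatorname{length}H^0_{\mathbf m}(A)$.

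Next I would feed in the special hypothesis that $I$ is generated by exactly $n+1=\dim S$ elements. Let $K_\bullet=K_\bullet(f_0,\dots,f_n;S)$ be the Koszul complex, with $K_0=S$, $K_1=\bigoplus_i S(-d_i)$ and $K_{n+1}=S(-D)$. The bound $\operatorname{grade}(I)\ge n$ makes $K_\bullet$ acyclic in homological degrees $\ge 2$, so its only homology is $H_0(K_\bullet)=A$ and $B:=H_1(K_\bullet)$. The Koszul complex is self-dual: $\Hom_S(K_\bullet,S)\cong K_\bullet(D)$ with the homological grading reversed, whence the dual complex is again Koszul and its top cohomology is $A(D)$. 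Running the hypercohomology spectral sequence $E_2^{p,q}=\Ext^p_S(H_q(K_\bullet),S)\Rightarrow H^{p+q}(\Hom_S(K_\bullet,S))$ (equivalently, chasing the two short exact sequences $0\to Z_1\to K_1\to I\to 0$ and $0\to B_1\to Z_1\to B\to 0$ extracted from $K_\bullet$), and using that $\dim B\le 1$ forces $\Ext^{p}_S(B,S)=0$ for $p\le n-1$, I expect to obtain a natural graded short exact sequence
\begin{equation*}
0\longrightarrow \Ext^{n+1}_S(A,S)\longrightarrow A(D)\longrightarrow \Ext^{n}_S(B,S)\longrightarrow 0,
\end{equation*}
realizing $\Ext^{n+1}_S(A,S)$ canonically as a graded submodule of $A(D)$.

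Finally I would conclude by a length count, which is where the argument becomes elementary. Being of finite length, the submodule $\Ext^{n+1}_S(A,S)\subseteq A(D)$ is contained in the largest finite length submodule $H^0_{\mathbf m}(A(D))=H^0_{\mathbf m}(A)(D)$; on the other hand its total $\bC$-dimension equals $\ell=\operatorname{length}H^0_{\mathbf m}(A)(D)$, since the graded dual preserves total dimension and twisting does not change it. An inclusion of finite length graded modules with equal total dimension is an equality, so $\Ext^{n+1}_S(A,S)=H^0_{\mathbf m}(A)(D)$, which is exactly the reduction from the first paragraph. Assembling, $H^0_{\mathbf m}(A)^{\vee}\cong H^0_{\mathbf m}(A)(\sigma)$, and dualizing once more gives $H^0_{\mathbf m}(A)\cong[H^0_{\mathbf m}(A)(\sigma)]^{\vee}$. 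I expect the only real obstacle to be the bookkeeping in the middle paragraph: pinning down the self-duality twist $D$ and the precise vanishing of Koszul homology so as to produce the displayed short exact sequence with the correct grading. Once the inclusion $\Ext^{n+1}_S(A,S)\hookrightarrow A(D)$ is in hand, the length comparison closes the argument immediately and, pleasantly, requires no structural analysis of $B$.
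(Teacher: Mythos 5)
Your argument is correct, but it is not the route this paper takes: it is essentially a reconstruction of Sernesi's original proof, which the introduction explicitly describes (and deliberately sets aside) as ``a spectral sequence argument for the hypercohomology of the Koszul complex.'' Your chain of reductions --- graded local duality over $S$ turning the claim into the single isomorphism $\Ext^{n+1}_S(A,S)\cong H^0_{\mathbf m}(A)(D)$, the self-duality $\Hom_S(K_\bullet,S)\cong K_{n+1-\bullet}(D)$ producing the exact sequence $0\to\Ext^{n+1}_S(A,S)\to A(D)\to\Ext^{n}_S(H_1(K_\bullet),S)\to 0$, and the finite-length comparison inside $H^0_{\mathbf m}(A)(D)$ --- checks out, including the two points you flag as bookkeeping: the twist is indeed $D$, and the filtration does place $\Ext^{n+1}_S(A,S)=E_\infty^{n+1,0}$ as the submodule (the edge map $\Ext^{n+1}_S(H_0(K_\bullet),S)\to H^{n+1}(\Hom_S(K_\bullet,S))$ is the inclusion of the deepest filtration step, and the required vanishing $\Ext^{p}_S(H_1(K_\bullet),S)=0$ for $p\le n-1$ follows from $\dim H_1(K_\bullet)\le 1$ as you say). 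Your closing length count is a genuine simplification, since it avoids identifying either $\Ext^n_S(H_1(K_\bullet),S)$ or the image of the edge map explicitly. The paper instead proves the statement as Theorem \ref{main}, by linkage: it passes to the one-dimensional complete intersection $T=S/(f_1,\dots,f_n)$, links $A=T/H$ (with $H$ the image of the saturation $I^s$) to the residual ring $T/K$, $K=(0:_TH)$, shows that multiplication by $f_0$ gives an injection $(T/K)[\tau-d_0]\hookrightarrow\Hom_T(T/K,\Omega_T)$ --- the graded-module avatar of Gherardelli's isomorphism $\omega_{\Theta}\cong\oc_{\Theta}(\tau-d_0)$ --- identifies its cokernel with $H^0_{T_+}(R)[\tau]$ via the presentation of $I^s$, and then dualizes this sequence with $\Hom_T(\cdot,\Omega_T)$ and Grothendieck local duality over $T$. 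What the paper's route buys is precisely its advertised point, the interpretation of $H^0_{\mathbf m}(R)$ as the obstruction to Gherardelli's theorem holding for graded modules; what yours buys is economy, needing no linkage machinery and no duality theory for maximal Cohen--Macaulay $T$-modules, only the Koszul complex over $S$ and local duality there. Both arguments are characteristic-free, so the paper's emphasis on that feature does not by itself favor its method over yours.
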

The proof of (\ref{Sernesi})  is an adaptation of the proof of Macaulay's Theorem, and rests on a spectral sequence argument for the hypercohomology of the Koszul complex.

The main aim of this paper is to point out that there is still another interpretation of the local cohomology ring, in the context of \textit{linkage theory} (we keep on the assumpion that $X$ has only isolated singularities). 
Roughly speaking,  \textit{ it measures the failure of Gherardelli's theorem to hold true for the corresponding graded modules} (cf. Theorem \ref{main} and Remark \ref{final}).
This leads us to a different and characteristic free proof of (\ref{Sernesi}), which turns out to be an easy consequence of \textit{Grothendieck's local  duality theorem} \cite[Theorem 3.6.19]{BH} \cite[Theorem A.1.9]{Eisenbud2}.

Since the appearance of the seminal paper of Peskine and Szpiro \cite{PS}, linkage theory turned out to be a fundamental tool in projective geometry. For instance, linkage theory  is a crucial ingredient in the study of
space curves (see e.g. \cite{GP}, \cite{Rao}, \cite{RaoII}, \cite{MDPerrin} and references therein), of surfaces in $\mathbb P^4$ \cite{PR} and, more generally, of projective varieties of codimension two  \cite{JAG},
\cite{EFGI}, \cite{EFGII}. More recently, combining linkage theory with a series of recent results about Noether-Lefschetz theory for linear systems with base locus  (\cite{IJM}, \cite{RCMP},  and \cite{CCMII}), it has been possible to prove a version of the  Speciality Theorem for curves contained in suitable singular
factorial threefolds of $\mathbb P^5$ (compare with  \cite{SpRicerche}[Theorem 3.2]).

We recall that a pair of algebraic schemes 
$\Delta$ and $\Theta$  (both contained in the same projective space $\Ps ^r$) are said \textit{algebraically linked} by a complete intersection $\Gamma\supset \Delta\cup \Theta$
if: 

a) $\Delta$ and $\Theta$ are equidimensional without embedded components;

b) $\ic_{\Delta\slash \Gamma}\cong \Hom( \oc _{\Theta}, \oc _{\Gamma})$ and $\ic_{\Theta\slash \Gamma}\cong \Hom( \oc _{\Delta}, \oc _{\Gamma})$.
\par\noindent
Assume that $\Gamma= V(f_1 , \dots ,f_n)$ and that $\Delta=V(f_0 , \dots , f_n)$, with $f_0, \dots f_n$ homogeneous polynomials  of degrees $d_0 , \dots , d_n$.
One of the key results of linkage theory is \textit{Gerardelli's theorem} (cf. \cite{FKL}[Theorem 2.5]:
\begin{equation}\label{Gerardelli}
\omega _{\Theta }\cong \oc _{\Theta }(\tau-d_0),  \,\,\, \tau:=\sum_1^n d_i -r-1.
\end{equation}

The main result of this paper is the remark  that, in some sense, \textit{the 0-th local cohomology module measures the failure of (\ref{Gerardelli}) to hold true for the corresponding graded modules} (cf. Theorem \ref{main} and Remark \ref{final}).
As mentioned before, this leads us to a  different and characteristic independent proof of (\ref{Sernesi}), which turns out to be an easy consequence of Grothendieck's local  duality theorem.

The proofs of our results can be found in \S 4. Sections 2 and 3, containing standard results of linkage theory which are  probably well-known to experts, are added in the attempt of making this paper reasonably self-contained. 
They consist in an adaptation to graded rings of some results usually stated for either local rings or ideal sheaves.

\medskip

\section{Preliminary results}

\medskip
Since the appearance of the seminal paper of Peskine and Szpiro \cite{PS}, linkage theory is mainly applied to local rings and to ideal sheaves of the projective space. One of the most popular results of
linkage theory is  Gherardelli's theorem, concerning the linkage of quasi-complete intersections projective schemes \cite{FKL}[Theorem 2.5]. As far as we know, Gherardelli's approach is much rarely
applied to graded rings.
In this section we collect some definitions and results concerning linkage theory for graded rings that are needed in the sequel. We also include  a proof of some results, although they are probably well-known, in the attempt of making this paper reasonably self-contained. 
They consist in an  adaptation to graded rings of some results usually stated for either local rings or ideal sheaves.
Since we are mainly interested to graded rings of dimension one, all  arguments are fairly plain. We follow very closely the approach of \cite[\S 21]{Eisenbud}.

\medskip 

\begin{notations}\label{dualmodule}
\begin{enumerate}
\item In this paper, $\bK$ denotes an algebraically closed field of any characteristic.
\item For any graded $\bK$-algebra $L=\bigoplus _0^{\infty}L_n$, with $L_0=\bK$, we denote by 
$L_+=\bigoplus _1^{\infty}L_n$ the \textit{irrelevant maximal ideal} of $L$.
\item Let $S:= \bK [x_0, \dots , x_n]$ be the polynomial ring in n+1 variables, with the usual grading.
Let $f_1, \dots , f_n$  be  a regular sequence of  homogeneous  polynomials of degrees $d_1, \dots , d_n$  and set $J:=(f_1, \dots , f_n)$. We denote by $T:=S \slash J$. Since $f_1, \dots , f_n$  is a regular sequence, $T$ is a \textit{complete intersection graded ring} of dimension $1$.
\item As usual, we denote by $\Omega _T$ the dualizing module of $T$. Since $T$ is a complete intersection  ring, we have
$\Omega _T \cong T[\tau]$, with $\tau:=\sum _{i=1}^n d_i -n -1$ \cite[\S 21.11]{Eisenbud}.
\end{enumerate}
\end{notations}

\begin{lemma}
\label{vanishing} Let $M$ be a $T$-module.
\begin{enumerate}
\item For $i> 1$, $\Ext _T^i(M, T)=0$.
\item If $M$ is a maximal Cohen-Macaulay  module \cite[\S 21.4]{Eisenbud}, then $\Ext _T^1(M, T)=0$.
\end{enumerate}
\end{lemma}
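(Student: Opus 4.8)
The plan is to deduce both statements from the single fact that $T$ is a one-dimensional Gorenstein ring, and to treat part (2) as a refinement of part (1). Since $f_1,\dots,f_n$ is a regular sequence, $T$ is a complete intersection, hence Gorenstein, of Krull dimension $1$, with dualizing module $\Omega_T\cong T[\tau]$ as recorded in Notations \ref{dualmodule}. The two parts then measure, respectively, the length of a (graded) injective resolution of $T$ and the behaviour of $\Ext^1$ on modules of maximal depth.

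For part (1), I would invoke the standard characterization of Gorenstein rings: a Noetherian (graded) ring that is Gorenstein of dimension $d$ has injective dimension $d$ over itself (cf. \cite[Theorem 3.1.17]{BH} and its graded analogue). Since $\dim T=1$, the minimal graded injective resolution of $T$ has length $1$, and therefore $\Ext_T^i(M,T)=0$ for every $i>1$ and every $T$-module $M$, with no finiteness hypothesis on $M$. Equivalently, for finitely generated $M$ one may localize, since the formation of $\Ext$ then commutes with localization and each $T_{\mathfrak p}$ is Gorenstein of dimension $\le 1$, so that the vanishing can be checked prime by prime.

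For part (2), I would bootstrap from part (1) by a Nakayama argument, which keeps the proof elementary and characteristic free. Assume $M$ is a finitely generated maximal Cohen--Macaulay module, so $\depth M=\dim T=1$; then $T_+\notin\Ass M$, and graded prime avoidance yields a homogeneous $t\in T_+$, say of degree $e>0$, that is a nonzerodivisor on $M$. Applying $\Hom_T(-,T)$ to the short exact sequence $0\to M\xrightarrow{\ \cdot t\ }M(e)\to (M/tM)(e)\to 0$ produces the exact stretch
$$
\Ext_T^1(M(e),T)\xrightarrow{\ \cdot t\ }\Ext_T^1(M,T)\to \Ext_T^2\big((M/tM)(e),T\big),
$$
whose right-hand term vanishes by part (1). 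Hence multiplication by $t$ is surjective on $\Ext_T^1(M,T)$; identifying $\Ext_T^1(M(e),T)\cong\Ext_T^1(M,T)(-e)$, this says $\Ext_T^1(M,T)=t\,\Ext_T^1(M,T)\subseteq T_+\cdot\Ext_T^1(M,T)$. Since $\Ext_T^1(M,T)$ is a finitely generated graded $T$-module, bounded below, graded Nakayama forces $\Ext_T^1(M,T)=0$.

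The main obstacle, such as it is, lies in the bookkeeping rather than in any deep point: I must make sure the homological statements are read in the graded category (graded injective dimension, graded prime avoidance, graded Nakayama), so that the degree twists are tracked correctly, and I must confirm that $T_+$ genuinely fails to be an associated prime of an MCM module, which is exactly the translation of $\depth M\ge 1$ into the existence of the nonzerodivisor $t$. An alternative route to part (2) is graded local duality, which gives $\Ext_T^1(M,\Omega_T)^{\vee}\cong H^0_{T_+}(M)$, vanishing because $\depth M\ge 1$ forces $H^0_{T_+}(M)=0$; I prefer the Nakayama argument here precisely because it avoids invoking local duality, the tool I wish to reserve for the main theorem.
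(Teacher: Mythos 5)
Your proposal is correct and follows essentially the same route as the paper: part (1) from the fact that the Gorenstein ring $T$ has injective dimension equal to its Krull dimension $1$, and part (2) by dualizing the multiplication sequence for a homogeneous nonzerodivisor on $M$, using part (1) to kill the $\Ext^2$ term, and concluding with graded Nakayama. Your extra care with degree twists and the remark on the local-duality alternative are fine but do not change the argument.
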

\begin{proof}
(1)  is a direct consequence of the fact that $T$ has injective dimension one, as follows from 
\cite[Theorem 3.1.17 and Proposition 3.6.11]{BH}.

As for (2), since $M$ is a maximal Cohen-Macaulay  module and $T$ has dimension one, there exists an element $x\in S_1$ which is
nonzerodivisor in $M$. Applying $\Hom _T(\cdot , T)$ to the short exact sequence
$$0 \rightarrow M \stackrel{x}{\rightarrow} M \rightarrow M\slash xM \rightarrow 0$$
we derive the long exact sequence
$$ \dots \rightarrow \Ext _T^1(M, T) \stackrel{x}{\rightarrow} \Ext _T^1(M, T) \rightarrow \Ext _T^2(M\slash xM,  T) \rightarrow \dots .$$
By point (1), we have $\Ext _T^2(M\slash xM,  T)=0$, hence we are done owing to Nakayama's Lemma [Lemma 1.4]\cite{Eisenbud2}.
\end{proof}
\medskip

\begin{lemma}\label{nakayama}
Let $A$ be a graded $\bK$-algebra, and let $M$ and $N$ be finitely generated graded $A$-modules. Suppose that $x\in A_+$ is a nonzerodivisor on $N$. Then a morphism of graded modules $\phi: M\to N$ 
is an isomorphism if and only  the induced map  $\psi : M\slash xM \to N\slash xN$ is  an isomorphism.
\end{lemma}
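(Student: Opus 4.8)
The plan is to prove Lemma \ref{nakayama} by a standard graded Nakayama argument applied to the kernel and cokernel of $\phi$. The key observation is that, since $x$ is a nonzerodivisor on $N$, reduction modulo $x$ interacts nicely with the short exact sequences coming from $\phi$, and the graded version of Nakayama's Lemma lets us detect surjectivity and injectivity after killing $x$.

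First I would treat surjectivity. Let $C = \operatorname{coker}(\phi) = N/\phi(M)$, which is a finitely generated graded $A$-module. Reducing the exact sequence $M \xrightarrow{\phi} N \to C \to 0$ modulo $x$ gives an exact sequence $M/xM \to N/xN \to C/xC \to 0$, so the cokernel of $\psi$ is exactly $C/xC$. If $\psi$ is surjective then $C/xC = 0$, i.e.\ $C = xC = A_+ C$, and graded Nakayama (\cite[Lemma 1.4]{Eisenbud2}) forces $C = 0$; hence $\phi$ is surjective. Next I would handle injectivity. Let $K = \ker(\phi)$, again finitely generated and graded. Having just shown $\phi$ is surjective, consider the short exact sequence
\begin{equation*}
0 \to K \to M \xrightarrow{\phi} N \to 0.
\end{equation*}
The point where the hypothesis on $x$ really enters is that multiplication by $x$ on $N$ is injective: applying the snake lemma to multiplication by $x$ across this sequence, the connecting map produces an exact sequence whose relevant portion reads
\begin{equation*}
0 \to K/xK \to M/xM \xrightarrow{\psi} N/xN,
\end{equation*}
the injectivity of $x$ on $N$ being exactly what kills the $\operatorname{Tor}$-type obstruction and yields $\ker(\psi) \cong K/xK$. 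If $\psi$ is injective, then $K/xK = 0$, so again graded Nakayama gives $K = 0$ and $\phi$ is injective.

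Combining the two halves, $\psi$ an isomorphism implies $\phi$ is both injective and surjective, hence an isomorphism; the converse direction is immediate since reduction modulo $x$ is functorial. The main technical point to get right is the injectivity step: one must verify that the exactness of
\begin{equation*}
0 \to K/xK \to M/xM \xrightarrow{\psi} N/xN
\end{equation*}
genuinely uses that $x$ is a nonzerodivisor on $N$ (otherwise there is an extra $\operatorname{Tor}_1^A(N, A/x)$ term), so I would be careful to invoke that hypothesis precisely at the snake-lemma step rather than taking the left-exactness of reduction for granted. Everything else is the routine bookkeeping of graded Nakayama, which applies because $x \in A_+$ and the modules are finitely generated.
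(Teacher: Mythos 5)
Your proof is correct and follows essentially the same strategy as the paper: show the cokernel and kernel vanish via graded Nakayama, using that $x$ is a nonzerodivisor on $N$ to control the kernel modulo $x$. The only cosmetic difference is that you package the injectivity step as a snake-lemma computation on $0\to K\to M\to N\to 0$ (so you need surjectivity first), whereas the paper runs a direct element chase showing $K\subset xM$ and $(K:_M x)=K$, hence $K=xK$ — the two are the same computation in different clothing.
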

\begin{proof}
The surjectivity is an immediate consequence of Nakayama's Lemma [Lemma 1.4]\cite{Eisenbud2}.

As for the injectivity, set $K:= \ker \phi $ and assume $\psi$ is an isomorphism. Of course 
we must have $K\subset xM$.   
Furthermore, since $x$ is a nonzerodivisor on $N$, we have $(K:_M x)=K$. 
Then we have
$$m\in K \,\, \Rightarrow \,\,  m=xm' \,\, \Rightarrow \,\,  m'\in (K:_M x)=K \,\,  \Rightarrow \,\, m\in xK, $$
where the first implication comes from the inclusion $K\subset xM$.
Then $xK=K$ and conclude again by Nakayama's Lemma.
\end{proof}

The following result is an adaptation to our context of  the Theorem 21.21 of \cite{Eisenbud}:

\begin{theorem}[Duality] 
\label{duality}
Let $D$ be the functor $\Hom _T(\cdot , \Omega _T)$. Then $D$ is a dualizing functor 
on the category of maximal Cohen-Macaulay $T$-modules in the sense that
\begin{enumerate}
\item $D$ takes maximal Cohen-Macaulay $T$-modules to maximal Cohen-Macaulay $T$-modules.
More precisely, if $M$ is a maximal Cohen-Macaulay $T$-module then $\Hom _T(M , \Omega _T)\not=0$. Furthermore, if $x$ is a nonzerodivisor on $M$ and  $T$, then  $x$ is a nonzerodivisor on $\Hom _T(M , \Omega _T)$
and 
$$\Hom _T(M , \Omega _T)\slash x \Hom _T(M , \Omega _T)\cong \Hom _{T\slash xT}(M\slash xM  , \Omega _T \slash x\Omega _T).
$$
\item $D$ takes exact sequences of maximal Cohen-Macaulay $T$-modules  to exact sequences.
\item The natural map $M\to \Hom _T(\Hom _T(M , \Omega _T) , \Omega _T)$ is an isomorphism when $M$ is 
a maximal Cohen-Macaulay $T$-module.
\end{enumerate}
\end{theorem}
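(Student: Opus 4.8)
The plan is to reduce every assertion to the Artinian case by cutting down with a single nonzerodivisor, exactly in the spirit of \cite[Theorem 21.21]{Eisenbud}. Since $T$ has dimension one and $M$ is maximal Cohen--Macaulay, I may choose $x\in S_1$ which is a nonzerodivisor on both $T$ and $M$; write $\overline{T}:=T/xT$ and $\overline{M}:=M/xM$, and observe that $\overline{T}$ is an Artinian complete intersection with dualizing module $\Omega_{\overline{T}}\cong \Omega_T/x\Omega_T\cong \overline{T}[\tau]$, while $\overline{M}$ is a finite $\overline{T}$-module, nonzero by Nakayama and hence trivially maximal Cohen--Macaulay over the zero-dimensional ring $\overline{T}$. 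The two workhorses will be Lemma \ref{vanishing} (vanishing of higher $\Ext$ into $T$, hence into the shift $\Omega_T\cong T[\tau]$) together with the change-of-rings (Rees) isomorphism
$$\Ext_T^{i+1}(N,\Omega_T)\cong \Ext_{\overline{T}}^{i}(N,\Omega_{\overline{T}}),\qquad i\geq 0,$$
valid for every $\overline{T}$-module $N$ because $x$ is a nonzerodivisor on both $T$ and $\Omega_T$ (see \cite[Lemma 3.1.16]{BH}).

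For part (1), I would apply $\Hom_T(\cdot,\Omega_T)$ to the short exact sequence $0\to M\stackrel{x}{\to} M\to \overline{M}\to 0$. Since $\overline{M}$ is annihilated by $x$ whereas $x$ is a nonzerodivisor on $\Omega_T$, one gets $\Hom_T(\overline{M},\Omega_T)=0$, which already shows that $x$ is a nonzerodivisor on $\Hom_T(M,\Omega_T)$. Moreover $\Ext_T^1(M,\Omega_T)=0$ by Lemma \ref{vanishing}(2), so the long exact sequence collapses to
$$\Hom_T(M,\Omega_T)/x\Hom_T(M,\Omega_T)\cong \Ext_T^1(\overline{M},\Omega_T),$$
and the Rees isomorphism with $i=0$ identifies the right-hand side with $\Hom_{\overline{T}}(\overline{M},\Omega_{\overline{T}})$, which is precisely the claimed base-change formula. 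Nonvanishing of $\Hom_T(M,\Omega_T)$ then follows because its reduction modulo $x$ equals $\Hom_{\overline{T}}(\overline{M},\Omega_{\overline{T}})$, and this is nonzero by Matlis duality over the Artinian Gorenstein ring $\overline{T}$, since a nonzero finite module there has a nonzero dual.

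Part (2) is immediate: applying $\Hom_T(\cdot,\Omega_T)$ to a short exact sequence $0\to M'\to M\to M''\to 0$ of maximal Cohen--Macaulay modules yields a long exact sequence whose first obstruction term is $\Ext_T^1(M'',\Omega_T)$, and this vanishes by Lemma \ref{vanishing}(2); hence $D$ preserves exactness. For part (3), I would establish that the biduality map $\eta_M\colon M\to D(D(M))$ is an isomorphism by descent through Lemma \ref{nakayama}. By part (1), $D(M)$ is again maximal Cohen--Macaulay with $x$ a nonzerodivisor, and reducing modulo $x$ twice produces natural identifications $D(M)/x\cong D_{\overline{T}}(\overline{M})$ and $D(D(M))/x\cong D_{\overline{T}}(D_{\overline{T}}(\overline{M}))$, where $D_{\overline{T}}:=\Hom_{\overline{T}}(\cdot,\Omega_{\overline{T}})$. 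Under these identifications the reduction of $\eta_M$ is the biduality map $\eta_{\overline{M}}$ over $\overline{T}$, which is an isomorphism by Gorenstein Artinian duality; Lemma \ref{nakayama} then lifts this to the claim that $\eta_M$ itself is an isomorphism.

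The step I expect to be the main obstacle is exactly this last compatibility in part (3): verifying that the reduction modulo $x$ of the natural evaluation map $\eta_M$ agrees, under the base-change isomorphisms of part (1), with the evaluation map $\eta_{\overline{M}}$ over $\overline{T}$. This is a naturality check that requires tracking the connecting homomorphisms furnishing the Rees isomorphism and confirming they intertwine the two evaluation pairings. It is routine but must be carried out with care, since the entire reduction-to-dimension-zero strategy rests on it.
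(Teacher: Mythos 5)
Your proposal is correct and follows essentially the same strategy as the paper's proof: reduce everything to the Artinian quotient $T\slash xT$ by a nonzerodivisor $x$, use Lemma \ref{vanishing} to kill the obstruction $\Ext^1$ terms, and lift biduality from the Artinian Gorenstein case via Lemma \ref{nakayama}. The only genuine divergences are local to part (1). For the base-change formula the paper applies $\Hom_T(M,\cdot)$ to $0\rightarrow T\stackrel{x}{\rightarrow} T\rightarrow T\slash xT\rightarrow 0$ and then factors homomorphisms $M\to \Omega_T\slash x\Omega_T$ through $M\slash xM$, which avoids any change-of-rings theorem; you instead dualize $0\rightarrow M\stackrel{x}{\rightarrow} M\rightarrow M\slash xM\rightarrow 0$ in the first variable and invoke the Rees isomorphism to identify $\Ext^1_T(M\slash xM,\Omega_T)$ with $\Hom_{T\slash xT}(M\slash xM,\Omega_T\slash x\Omega_T)$ --- only the $i=0$ case is needed, which is exactly \cite[Lemma 3.1.16]{BH}, so this is sound. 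For the nonvanishing of $\Hom_T(M,\Omega_T)$ the paper argues that vanishing would force $\Ann M$ to contain a $T$-regular element, contradicting maximal Cohen--Macaulayness, whereas you deduce it from Nakayama plus duality over the Artinian Gorenstein quotient; both work, and yours has the minor merit of staying entirely inside the dimension-reduction framework. One cosmetic caveat: with the graded normalization of \cite[\S 21.11]{Eisenbud} one has $\Omega_T\slash x\Omega_T\cong \Omega_{T\slash xT}[-1]$ rather than $\Omega_{T\slash xT}$ (the paper records this shift in part (3)); this does not affect any of your arguments, since you consistently use $\Omega_T\slash x\Omega_T$ as the dualizing module of the quotient, but it should be noted if the identifications are to match the paper's literally. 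The naturality check you flag at the end of part (3) is indeed the one point both arguments pass over quickly; the paper simply asserts the commutativity of the relevant diagram.
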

\begin{proof}
(1). 
If $\Hom _T(M , \Omega _T)$  vanished,  then  $\Ann M$ would contain a $T$-regular element (cf. \cite[Proposition 1.2.3]{BH}). Then, the module $M$ would be supported in codimension one, in contrast with the assumption it is a maximal Cohen-Macaulay $T$-module.
Assume now
$x$ is a nonzerodivisor on $M$ and  $T$,  which entails $\Hom _T(T\slash xT , \Omega_T)=\Hom _T(T\slash xT , T[\tau])=0$ (cf. \cite[Proposition 1.2.3]{BH}).
Applying the functor $\Hom _T(M[-\tau] , \cdot)$ to the short exact sequence
$$0 \rightarrow T \stackrel{x}{\rightarrow} T \rightarrow T\slash xT \rightarrow 0$$
we derive a long exact sequence beginning with
\begin{equation}\label{longes}
0 \rightarrow \Hom _T(M , \Omega_T) \stackrel{x}{\rightarrow} \Hom _T(M , \Omega_T) \rightarrow \Hom _T(M , \Omega_T\slash x \Omega_T) \rightarrow 
\Ext ^1 _T(M , \Omega_T)=0,
\end{equation}
(the last equality follows from Lemma \ref{vanishing} (2)).
Thus $x$ is a nonzerodivisor on $\Hom _T(M , \Omega _T)$. On the other hand,
any homomorphism $M\to \Omega_T\slash x \Omega_T$ factors uniquely through $M\slash xM$, hence
$$\Hom _T(M , \Omega_T\slash x \Omega_T)\cong \Hom _T(M\slash xM , \Omega_T\slash x \Omega_T) \cong \Hom _{T\slash xT}(M\slash xM  , \Omega _T \slash x\Omega _T),$$
and (\ref{longes}) becomes 
$$
0 \rightarrow \Hom _T(M , \Omega_T) \stackrel{x}{\rightarrow} \Hom _T(M , \Omega_T) \rightarrow \Hom _{T\slash xT}(M\slash xM  , \Omega _T \slash x\Omega _T) \rightarrow 
0.
$$
(2). If $0\to M' \to M \to M'' \to 0$ is a short exact sequence of $T$-modules, with $M''$ maximal Cohen-Macaulay, then just applying 
$\Hom _T(\cdot , \Omega _T)$ we get
$$
0 \rightarrow \Hom _T(M'' , \Omega_T) \rightarrow \Hom _T(M , \Omega_T) \rightarrow \Hom _T(M' , \Omega_T) \rightarrow 
\Ext ^1 _T(M'' , \Omega_T)=0,
$$
where the last equality follows from Lemma \ref{vanishing} (2).

(3). Consider a general $x\in S_1$. Then $x, f_1, \dots , f_n$ is a regular sequence in $S$ and in view of \cite[\S 21.11]{Eisenbud} we have
$$
\Omega _T \slash x\Omega _T \cong T \slash xT [\tau] \cong \Omega _{T\slash xT}[-1].
$$
On the other hand, $T\slash xT$ is an artinian ring and $D'(\cdot ):= Hom_{T\slash xT} (\cdot ,\Omega _{T\slash xT})$ is a dualizing functor
on the category of $T\slash xT$-modules \cite[\S 21.1]{Eisenbud}.
Denote by $\phi _M: M\to D(D(M))$ the natural map. By point (1), we have 
$D(M)\slash xD(M) \cong D'(M\slash xM[1])$, hence
$$
D(D(M))\slash xD(D(M))\cong D'( D(M)\slash xD(M)[1])\cong D'(D'(M\slash xM)).
$$
We conclude just applying Lemma \ref{nakayama} to the  following commutative diagram

\begin{equation}
\begin{array}{ccccccccc}
0 & \rightarrow & M & \stackrel{x}{\rightarrow} & M & \rightarrow & M\slash xM & \rightarrow &  0 \\
  &             &\stackrel {\phi _M}{}\downarrow    &                        &  \stackrel {\phi _M}{}\downarrow      &                            &     \updownarrow     &             &           \\
0 & \rightarrow & D(D(M)) & \stackrel{x}{\rightarrow} & D(D(M)) & \rightarrow &D'(D'(M\slash xM)) & \rightarrow &  0. \\
\end{array}
\end{equation}
\end{proof}

\bigskip

\section{Gherardelli Linkage}

\medskip
Let $I:=(f_k, \dots , f_n)\subset S$, $0\leq k < n$, be a proper  ideal, with homogeneous  generators of degrees $d_k, \dots , d_n$  and assume $d_k\leq d_{k+1} \leq \dots \leq d_n$.
Set $R:=S \slash I$ and assume moreover $\dim \Proj R=k$. We call such a ring  a \textit{quasi-complete intersection ring}.
Further, we denote by $\Delta:= \Proj R\subset \Pn $ the projective $k$-dimensional scheme associated to $R$, and by $\ic_{\Delta}:=\widetilde{I}\subset \oc_{\Ps ^n}$
its ideal sheaf. 

\begin{lemma}
\label{existsci} With notations as above, we can assume that $f_{k+1}, \dots , f_n$ is a regular sequence.
\end{lemma}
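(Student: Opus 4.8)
The plan is to build a regular sequence of the right length \emph{inside} $I$ by a prime-avoidance induction, replacing the generators $f_{k+1},\dots,f_n$ one at a time while keeping the ideal fixed. The starting point is a dimension count: since $\dim\Proj R=k$ we have $\dim R=k+1$, so in the Cohen--Macaulay ring $S$ the ideal $I$ has height $\codim I=(n+1)-(k+1)=n-k$, and moreover every minimal prime of $I$ has height $\ge n-k$ (otherwise $\dim R>k+1$). Consequently no prime of codimension $<n-k$ can contain $I$; this is the fact that will drive the induction.

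I would construct homogeneous elements $h_n,h_{n-1},\dots,h_{k+1}$, in this decreasing-degree order, with $\deg h_j=d_j$, such that $h_{j},\dots,h_n$ is a regular sequence and such that replacing $f_j$ by $h_j$ leaves $I$ unchanged. At the step producing $h_j$ the elements $h_{j+1},\dots,h_n$ already form a regular sequence, so $(h_{j+1},\dots,h_n)$ is a complete intersection, hence unmixed, and all its associated primes $\mathfrak p_1,\dots,\mathfrak p_s$ have codimension exactly $n-j$. Because $j\ge k+1$ we get $n-j\le n-k-1<n-k=\codim I$, so $I\not\subseteq\mathfrak p_i$ for every $i$. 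I then take $h_j:=f_j+c$, with $c$ a homogeneous degree-$d_j$ element of the ideal $J:=(f_k,\dots,f_{j-1},h_{j+1},\dots,h_n)$ generated by the remaining current generators; such a choice automatically preserves $I$, since $h_j-c=f_j$, and it remains only to arrange $h_j\notin\mathfrak p_i$ for all $i$, which makes $h_j$ a nonzerodivisor modulo $(h_{j+1},\dots,h_n)$ and hence extends the regular sequence.

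The main obstacle is the \emph{degree constraint}: I must find the new element in the prescribed degree $d_j$, not merely in some large degree, which is exactly why one works with the larger degrees $d_{k+1}\le\cdots\le d_n$ and discards the smallest generator $f_k$. The key observation resolving this is that, since $f_j$ has degree exactly $d_j$, one has $I_{d_j}=\bK f_j+J_{d_j}$, so the coset $f_j+J_{d_j}$ is contained in $\mathfrak p_i$ if and only if $I_{d_j}\subseteq\mathfrak p_i$. It therefore suffices to show $I_{d_j}\not\subseteq\mathfrak p_i$. For this I use $I\not\subseteq\mathfrak p_i$ to produce a generator of degree $\le d_j$ outside $\mathfrak p_i$: the generators $h_{j+1},\dots,h_n$ lie in $\mathfrak p_i$, so some $f_l$ with $l\le j$, i.e. $d_l\le d_j$, must avoid it; then, using that $\mathfrak p_i$ is not the irrelevant ideal $\mathfrak m$ (its codimension is $<n+1$) and hence misses some variable $x_t$, I multiply by the power $x_t^{\,d_j-d_l}$ to land in $I_{d_j}\setminus\mathfrak p_i$.

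Finally, since none of the finitely many cosets $f_j+J_{d_j}$ is contained in any $\mathfrak p_i$ and $\bK$ is infinite, a general element of $f_j+J_{d_j}$ avoids all the $\mathfrak p_i$ simultaneously, because a finite union of proper affine subspaces cannot cover the coset. Choosing $h_j$ to be such a general element completes the inductive step; the base case $j=n$ is the trivial one where $(h_{j+1},\dots,h_n)=(0)$ and $S$ is a domain. After relabelling the resulting $h_{k+1},\dots,h_n$ as $f_{k+1},\dots,f_n$ we obtain generators of $I$ whose top $n-k$ members form a regular sequence, as claimed.
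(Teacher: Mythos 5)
Your proof is correct and follows essentially the same route as the paper's: a decreasing induction replacing one generator at a time by a general homogeneous element of degree $d_j$ in the ideal, using that the associated primes of the partial complete intersection $(h_{j+1},\dots,h_n)$ have height $n-j<n-k=\operatorname{ht} I$ and so cannot contain $I$, together with the degree ordering $d_k\le\cdots\le d_n$ to realize the replacement in the prescribed degree. Your explicit coset argument ($h_j=f_j+c$ with $c\in J_{d_j}$, plus the multiplication by $x_t^{\,d_j-d_l}$) merely spells out the paper's terser claims that $\ic_{\Delta\slash\Gamma_\pi}$ is generated in degree $d_i$ and that a general element of $(f_k,\dots,f_i)_{d_i}$ both avoids the associated primes and preserves the ideal.
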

\begin{proof}
We prove by decreasing induction that, for any $i\geq k+1$, there exists a new system of homogeneous generators $(f_k, \dots , f_{i-1}, f_i', \dots , f_n')=I$, with $\deg f_i= \deg f_i'$ and such that $f_i', \dots , f_n'$ is a regular sequence.
Since the starting case $i=n$ is obvious, we assume by induction $i<n$ and $f_{i+1}', \dots , f_n'$ is a regular sequence. Denote by $\Gamma _i:= \Proj(S\slash (f_{i+1}', \dots , f_n'))\subset \Pn$ the complete intersection
projective scheme defined by the regular sequence  $f_{i+1}', \dots , f_n'$. Of course we have $\Delta \subset \Gamma _i$ and the ideal sheaf $\ic _{\Delta \slash \Gamma _i}$ is generated
by $f_{k}, \dots , f_i$. Fix a prime 
$\pi \in \Ass (S\slash (f_{i+1}', \dots , f_n'))$ and denote by  $\Gamma_{\pi}:= \Proj(S\slash \pi)$ the component of $\Gamma_i$ corresponding to $\pi$.
Since 
$\ic _{\Delta \slash \Gamma _{\pi}}$ is generated in degree $d_i$,
 the general element  $f_i'\in (f_{k}, \dots , f_i)_{d_i}$ does not vanish in $S\slash \pi$. As this holds true for any component of $\Gamma_i$, $f_i'$ is a nonzerodivisor in $S\slash (f_{i+1}', \dots , f_n')$. 
 We are done, in view of
$(f_{k}, \dots , f_i)=(f_{k}, \dots , f_{i-1}, f_i')$.
\end{proof}

\medskip

In the hypotheses of Lemma \ref{existsci}, we can assume that
the  ideal $J:=(f_{k+1}, \dots , f_n)$ is a \textit{complete intersection homogeneous ideal}. We denote by 
$\Gamma$ the corresponding complete intersection $k$-dimensional scheme, and by $\ic_{\Gamma}:=\widetilde{J}\subset \oc_{\Ps ^n}$
its ideal sheaf. 

\medskip
We borrow from \cite{PS} the main definition of linkage theory of projective varieties.

\begin{notations}
\label{deflinkageproj}
Let $\Delta$ and $\Theta$  be algebraic schemes contained in some projective space. We recall that the schemes 
$\Delta$ and $\Theta$ are said \textit{algebraically linked} by a complete intersection $\Gamma\supset \Delta\cup \Theta$
if: 

a) $\Delta$ and $\Theta$ are equidimensional without embedded components;

b) $\ic_{\Delta\slash \Gamma}\cong \Hom( \oc _{\Theta}, \oc _{\Gamma})$ and $\ic_{\Theta\slash \Gamma}\cong \Hom( \oc _{\Delta}, \oc _{\Gamma})$.
\end{notations}

\medskip

We quote some well known results from linkage theory (cf. \cite{PS}, \cite{FKL}) for which we sketch a short proof, for the benefit of the reader.
\begin{proposition}
\label{0dim} With notations as above, define the residual (of $\Delta$ in $\Gamma$) $k$-dimensional scheme $\Theta$  in such a way that $\ic _{\Theta  \slash \Gamma}:= \Ann_{\oc _{\Gamma}} \ic _{\Delta \slash \Gamma}$ and set $\lambda:=\sum_{k+1}^nd_i-n-1$. 
\begin{enumerate}
\item The dualizing sheaves and the ideals in $\Pn $ of $\Delta$ and $\Theta$ are related by the following short exact sequence
$$ 0 \rightarrow \ic _{\Gamma \slash \Pn}(\lambda ) \rightarrow \ic _{\Delta \slash \Pn}(\lambda )\rightarrow \omega _{\Theta }\rightarrow 0;
$$
\item (Gherardelli) we have an isomorphism $\omega _{\Theta }\cong \oc _{\Theta }(\lambda-d_k)$.
\end{enumerate}
\end{proposition}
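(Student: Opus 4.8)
The plan is to reduce everything to two structural facts and then feed them into the duality functor of Theorem \ref{duality}. The first fact is the adjunction formula for the complete intersection $\Gamma=V(f_{k+1},\dots,f_n)$, namely $\omega_{\Gamma}\cong\oc_{\Gamma}(\lambda)$ with $\lambda=\sum_{k+1}^n d_i-n-1$; this is the sheaf incarnation of the isomorphism $\Omega_T\cong T[\tau]$ recorded in Notations \ref{dualmodule}. The second fact is that $\ic_{\Delta/\Gamma}$ is \emph{principal}: since $I=J+(f_k)$, the ideal sheaf $\ic_{\Delta/\Gamma}=\ic_{\Delta/\Pn}/\ic_{\Gamma/\Pn}$ is nothing but the image $\bar f_k\,\oc_{\Gamma}$ of $f_k$ in $\oc_{\Gamma}$. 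The whole proposition will follow by combining these with the reflexivity of maximal Cohen--Macaulay modules over the Gorenstein ring $\oc_{\Gamma}$.

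For part (1), I would begin with the tautological short exact sequence $0\to\ic_{\Gamma/\Pn}\to\ic_{\Delta/\Pn}\to\ic_{\Delta/\Gamma}\to0$ coming from the inclusion $\Delta\subset\Gamma$, and twist it by $\lambda$. It then suffices to identify the rightmost term $\ic_{\Delta/\Gamma}(\lambda)$ with $\omega_{\Theta}$. Here I would invoke the duality functor $D=\mathcal{H}om_{\oc_{\Gamma}}(\cdot,\omega_{\Gamma})$ of Theorem \ref{duality}: since $\oc_{\Theta}$ is a maximal Cohen--Macaulay $\oc_{\Gamma}$-module, $\omega_{\Theta}\cong\mathcal{H}om_{\oc_{\Gamma}}(\oc_{\Theta},\omega_{\Gamma})=\mathcal{H}om_{\oc_{\Gamma}}(\oc_{\Theta},\oc_{\Gamma})(\lambda)$. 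Writing $\oc_{\Theta}=\oc_{\Gamma}/\ic_{\Theta/\Gamma}$ gives $\mathcal{H}om_{\oc_{\Gamma}}(\oc_{\Theta},\oc_{\Gamma})=\Ann_{\oc_{\Gamma}}(\ic_{\Theta/\Gamma})$, and the double-annihilator equality $\Ann_{\oc_{\Gamma}}(\ic_{\Theta/\Gamma})=\Ann_{\oc_{\Gamma}}(\Ann_{\oc_{\Gamma}}(\ic_{\Delta/\Gamma}))=\ic_{\Delta/\Gamma}$ (valid because $\ic_{\Delta/\Gamma}$ is an unmixed annihilator ideal) yields $\omega_{\Theta}\cong\ic_{\Delta/\Gamma}(\lambda)$, which is exactly what the twisted sequence needs.

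For part (2), I would exploit principality directly. Multiplication by $f_k$ gives a surjection $\oc_{\Gamma}(-d_k)\to\ic_{\Delta/\Gamma}$ whose kernel is $\Ann_{\oc_{\Gamma}}(\bar f_k)(-d_k)=\ic_{\Theta/\Gamma}(-d_k)$, precisely by the definition $\ic_{\Theta/\Gamma}=\Ann_{\oc_{\Gamma}}\ic_{\Delta/\Gamma}$. This produces a short exact sequence $0\to\ic_{\Theta/\Gamma}(-d_k)\to\oc_{\Gamma}(-d_k)\to\ic_{\Delta/\Gamma}\to0$, hence $\ic_{\Delta/\Gamma}\cong\oc_{\Theta}(-d_k)$. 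Feeding this into the identification $\omega_{\Theta}\cong\ic_{\Delta/\Gamma}(\lambda)$ obtained in part (1) gives $\omega_{\Theta}\cong\oc_{\Theta}(\lambda-d_k)$, establishing Gherardelli's isomorphism (and, incidentally, showing $\Theta$ is Gorenstein).

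The hard part will be the duality identification at the heart of part (1): both the isomorphism $\omega_{\Theta}\cong\mathcal{H}om_{\oc_{\Gamma}}(\oc_{\Theta},\omega_{\Gamma})$ and the double-annihilator equality ultimately rest on the reflexivity of maximal Cohen--Macaulay modules over the Gorenstein ring $\oc_{\Gamma}$, i.e.\ on $D$ being a dualizing involutive functor, which is the content of Theorem \ref{duality}. To apply it one must verify that $\oc_{\Theta}$ and $\ic_{\Delta/\Gamma}\cong\oc_{\Theta}(-d_k)$ are genuinely maximal Cohen--Macaulay, and this is where the linkage hypothesis that $\Theta$ be equidimensional without embedded components enters. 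In the dimension-one case $k=0$, which is the situation of primary interest, this is automatic, since a one-dimensional graded ring without embedded primes is Cohen--Macaulay; for $k\geq1$ the same conclusion requires the standard Cohen--Macaulayness transfer under linkage and must be checked separately.
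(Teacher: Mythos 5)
Your proposal is correct and follows essentially the same route as the paper: part (1) via the tautological sequence $0\to\ic_{\Gamma\slash\Pn}\to\ic_{\Delta\slash\Pn}\to\ic_{\Delta\slash\Gamma}\to0$ combined with the identification $\omega_{\Theta}\cong\Hom(\oc_{\Theta},\omega_{\Gamma})\cong\ic_{\Delta\slash\Gamma}\otimes\omega_{\Gamma}$, and part (2) via the multiplication-by-$f_k$ surjection $\oc_{\Gamma}(-d_k)\to\ic_{\Delta\slash\Gamma}$ whose kernel is $\ic_{\Theta\slash\Gamma}(-d_k)$. The only difference is one of sourcing: the paper outsources the duality and double-annihilator identities to \cite{FKL}[2.1.1], whereas you rederive them from the dualizing functor of Theorem \ref{duality}, correctly noting that the maximal Cohen--Macaulay hypotheses are what make that machinery apply.
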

\begin{proof} We follow \cite[Proposition 2.3 and Theorem 2.5]{FKL}.

(1). By duality theory we have
$$
\omega _{\Gamma}\cong \Ext ^n_{\Pn}(\oc_{\Gamma}, \oc_{\Pn}(-n-1))\cong \Hom (\det \nc ^*_{\Gamma \slash \Pn}, \oc_{\Gamma}(-n-1))\cong \oc_{\Gamma}(\lambda).
$$
By \cite[2.1.1]{FKL}, we have
$$
\omega _{\Theta}\cong \Hom (\oc_{\Theta}, \omega_{\Gamma})\cong \Hom (\oc_{\Theta}, \oc_{\Gamma})\otimes \omega_{\Gamma}\cong
\ic_{\Delta \slash \Gamma}\otimes \omega_{\Gamma}.
$$ 
To conclude, it suffices to consider the short exact sequence
$$
0\rightarrow \ic_{\Gamma \slash \Pn}\rightarrow \ic_{\Delta \slash \Pn}\rightarrow \ic_{\Delta \slash\Gamma }\rightarrow 0.
$$

(2). The ideal sheaf $\ic _{\Delta \slash \Gamma} $ is generated by $f_k$, hence multiplication 
by $f_k$ provides a surjection $\oc _{\Gamma}(-d_k) \to \ic _{\Delta \slash \Gamma}$. By \cite[2.1.1]{FKL}, its kernel is $\Ann \ic _{\Delta \slash \Gamma}[-d_k]\cong \ic _{\Theta \slash \Gamma}[-d_k]$, so we have an isomorphism
\begin{equation}\label{sheaftheoreticiso}
\oc _{\Theta}(-d_k) \cong  \ic _{\Delta \slash \Gamma}\cong \Hom_{\oc _{\Gamma}}(\oc _{\Delta}, \oc _{\Gamma})
\end{equation}
 and we are done, owing to (1).
\end{proof}

\medskip

\begin{remark}
\label{fruitful} When $k=0$, the meaning of Proposition \ref{0dim} (2) is that the residual $0$-dimensional scheme $\Theta$ is Gorenstein. In light of this, it may seem curious to state Gherardelli's isomorphism
in the form 
$\omega _{\Theta }\cong \oc _{\Theta }(\tau-d_0)$, since in this case we have $\omega _{\Theta }\cong \oc _{\Theta }(l)$, $\forall l$. Nevertheless, for future reference (cf. Remark \ref{final}) we find it convenient  to state the previous isomorphism in the given form.
\end{remark}

\bigskip

\section{Selfduality of the local cohomology}

\medskip
We now introduce a graded version of  \ref{deflinkageproj}.
\begin{definition}
\label{deflinkage}
Let $T$ be a complete intersection graded ring and let $H$ and $K$ be graded ideals of $T$, of height $0$. Set $A:=T\slash H $ and $B:=T\slash K$. The rings  
$A$ and $B$ are said \textit{algebraically linked by the complete intersection} $T$
if: 

a) $A$ and $B$ are equidimensional without embedded components;

b) $H\cong \Hom_T(B, T)$ and $K\cong \Hom_T(A, T)$.

\end{definition}

\medskip
 
Following \S 3,
let $I=(f_0, \dots , f_n)\subset S$ be an ideal generated by $n+1$ homogeneous polynomials, of degrees $d_0, \dots , d_n$, and set $R:=S\slash I$. Assume that $\dim \Proj R= 0$. Then $R$ is a quasi-complete intersection ring to which we  apply the Gherardelli linkage of \S 3. In particular, by Lemma \ref{existsci}  we may assume that the sequence  $f_1, \dots , f_n$ is regular. Hence the ideal
$J:=(f_1, \dots , f_n)$ provides a complete intersection scheme of dimension $0$ in $\Ps ^{n}$. Following the notations of previous sections, we set $T:= S\slash J$, $\Gamma:= \Proj T$ and $\Delta:= \Proj R$.

\begin{notations}
\label{CM} 
Let  $I^s:= \bigcup _{i=1}^{\infty}(I : S_+^i)$ be the saturation of $I$. Of course we have $\widetilde{I} = \widetilde{I^s}$.
Since $I^s$ is a saturated ideal of height n, the ring $A:=S\slash I^s$ is a one-dimensional  Cohen-Macaulay graded ring. Specifically, it is a maximal Cohen-Macaulay T-module
to which we are allowed to apply the results of \S 2.
For future reference, we recall the following relation involving the local cohomology of the $S$-module $R$ \cite[(5)]{Sernesi}:
\begin{equation}
\label{loccohom} H^0_{S_+}(R)\cong I^s \slash I.
\end{equation}
\end{notations}

\medskip

\begin{theorem}
\label{linkage} Denote by $H$ the image of $I^s$ in $T$ (so that we have $A=T\slash H$). Further,  set  $K:=(0:_T H)$ and $B:=T\slash K$. Then we have $K \cong\Hom _T(A, T)$ and the rings $A$ and $B$ are algebraically linked by the complete intersection $T$.
\end{theorem}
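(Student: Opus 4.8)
The plan is to verify the two conditions (a) and (b) of Definition \ref{deflinkage}, systematically exploiting the duality functor $D=\Hom_T(\cdot,\Omega_T)$ of Theorem \ref{duality} together with $\Omega_T\cong T[\tau]$. The extra identity $K\cong\Hom_T(A,T)$ is essentially built into the definition of $K$: a graded homomorphism $A=T/H\to T$ is the same datum as an element of $(0:_T H)=K$, so $\Hom_T(A,T)=(0:_T H)=K$ as graded modules. This simultaneously disposes of one half of condition (b) and of the supplementary claim in the statement.

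Next I would secure the Cohen--Macaulay properties needed for (a). By Notations \ref{CM}, $A$ is a maximal Cohen--Macaulay $T$-module. Applying the depth lemma to $0\to H\to T\to A\to 0$ gives $\depth_T H\geq \min(\depth T,\depth A+1)=1$, so the (nonzero) ideal $H$ is maximal Cohen--Macaulay as well. The delicate point is that $B=T/K$ must also be shown maximal Cohen--Macaulay, and here the naive depth estimate on $0\to K\to T\to B\to 0$ is inconclusive. Instead I would apply $D$ to the sequence for $A$: since all three terms are maximal Cohen--Macaulay, Theorem \ref{duality} parts (1)--(2) yields an exact sequence $0\to D(A)\to D(T)\to D(H)\to 0$. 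Identifying $D(A)=K[\tau]$ and $D(T)=\Omega_T=T[\tau]$, and noting that the first arrow is the shifted inclusion $K\hookrightarrow T$, I obtain $D(H)\cong (T/K)[\tau]=B[\tau]$. Because $H$ is maximal Cohen--Macaulay, so is $D(H)$ by Theorem \ref{duality}(1); hence $B[\tau]$, and therefore $B$, is maximal Cohen--Macaulay. Since one-dimensional Cohen--Macaulay rings are unmixed, both $A$ and $B$ are equidimensional without embedded components, settling (a).

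The remaining half of (b), namely $H\cong\Hom_T(B,T)$, then falls out of biduality. From the isomorphism $D(H)\cong B[\tau]$ just obtained, and the fact that the natural map $H\to D(D(H))$ is an isomorphism (Theorem \ref{duality}(3), applicable because $H$ is maximal Cohen--Macaulay), I get
$$H\cong D(D(H))\cong D(B[\tau])=\Hom_T(B,\Omega_T)[-\tau]\cong\Hom_T(B,T).$$
This yields condition (b) in full and completes the verification that $A$ and $B$ are algebraically linked by the complete intersection $T$.

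I expect the main obstacle to be the middle step: the maximal Cohen--Macaulayness of $B$ is invisible from its defining sequence $0\to K\to T\to B\to 0$ and genuinely requires routing through the duality functor applied to $A$, which is the conceptual heart of the argument. Once $B$ is known to be maximal Cohen--Macaulay, everything else reduces to an application of biduality and careful bookkeeping with the shift $\tau$; the one thing to watch throughout is keeping the graded shifts consistent so that the identifications $D(A)=K[\tau]$, $D(H)=B[\tau]$ and $D(B[\tau])\cong\Hom_T(B,T)$ all match up.
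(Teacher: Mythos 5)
Your proposal is correct and follows essentially the same route as the paper: identify $\Hom_T(A,T)$ with $(0:_TH)=K$ by evaluation at $1$, dualize the sequence $0\to H\to T\to A\to 0$ to recognize it (up to the twist by $\tau$) as the sequence $0\to K\to T\to B\to 0$, and recover $H\cong\Hom_T(B,T)$ by biduality (Theorem \ref{duality}(3)). The only cosmetic difference is that you establish the maximal Cohen--Macaulayness of $H$ (and hence of $B\cong D(H)[-\tau]$) via the depth lemma, where the paper cites \cite[Corollary 18.6]{Eisenbud} together with Theorem \ref{duality}(1); your explicit identification of the first arrow of the dualized sequence with the inclusion $K\hookrightarrow T$ is a welcome detail the paper leaves implicit.
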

\begin{proof}
We follow \cite[Theorem 21.23]{Eisenbud} and \cite[Proposition 2.1]{FKL}.
Since $A= T\slash H$, the map $\Hom _T(A, T)\to T$ given by evaluating at 1 is an isomorphism onto $(0:_T H)$, proving the first statement.
By  Theorem \ref{duality} (1), $K$ is a one-dimensional  Cohen-Macaulay ring. By  Theorem \ref{duality} (2),
the short exact sequence
$$
0\rightarrow K \rightarrow T \rightarrow B \rightarrow 0
$$
is, up to a twist, the dual of
$$
0\rightarrow H  \rightarrow T \rightarrow A \rightarrow 0
$$
By  Theorem \ref{duality} (3), the dual of the former short exact sequence is the latter one, hence we have $H  \cong\Hom _T(B, T)$.
Finally,   \cite[Corollary 18.6]{Eisenbud} entails that both $H$ and $K$ are one-dimensional  Cohen-Macaulay rings and so are $A$ and $B$, owing to  Theorem \ref{duality} (1).
\end{proof}
 
\medskip

\begin{remark}
\label{alsoIfrac} 
\begin{enumerate}
\item Observe that Theorem \ref{linkage} implies that $H   \cong\Hom _T(B, T) \cong (0:_T K)$.
\item With notations as in Proposition \ref{0dim} and Theorem \ref{linkage}, it is fairly standard to prove that $\Theta= \Proj B$ and $\ic _{\Theta \slash \Gamma}= \widetilde{K}$ 
\end{enumerate}
\end{remark}

\medskip

\begin{definition}
\label{matlis}
For any graded Cohen-Macaulay $\bK$-algebra $T=\bigoplus _0^{\infty}T_n$ of dimension $d$ and for any finitely generated graded $T$-module 
$M$ we denote by
$$
M^{\vee}=\bigoplus _k (M^{\vee})_k=\bigoplus _k \Hom_{\bK}(M_{-k}, \bK)
$$
the  \textit{Matlis dual} of $M$ \cite[p. 141]{BH}.

Recall the fundamental \textit{Grothendieck's local  duality theorem} \cite[Theorem 3.6.19]{BH} \cite[Theorem A.1.9]{Eisenbud2}: for all finitely generated graded $T$-modules $M$ and all integers i there exist natural homogeneous isomorphisms
\begin{equation}
\label{Grothduality} (H^i_{T_+}(M))^{\vee}\cong \Ext ^{d-i}_T(M, \Omega_T).
\end{equation}
\end{definition}

\bigskip
We are now ready to prove the main result of this paper:
\begin{theorem}
\label{main} Keep Notations as in (\ref{linkage}). We have a short exact sequence of graded modules:
$$
0 \rightarrow B[\tau -d_0] \rightarrow \Hom _T(B, \Omega _T) \rightarrow H^0_{T_+}(R)[\tau] \rightarrow 0,
$$
with $\tau:=\sum _{i=1}^n d_i -n -1$.
Moreover, the local cohomology module $ H^0_{T_+}(R)$ is self-dual 
$$(H^0_{T_+}(R)[\tau +d_0])^{\vee}\cong H^0_{T_+}(R).$$
\end{theorem}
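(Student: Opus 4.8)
The plan is to deduce the self-duality from Grothendieck's local duality, the engine being the short exact sequence asserted in the first part of the statement. Write $N:=H^0_{T_+}(R)$. Since $R$ is one-dimensional and $N$ is its $T_+$-torsion submodule, $N$ has finite length; and because $(M[a])^{\vee}=M^{\vee}[-a]$, the asserted isomorphism $(N[\tau+d_0])^{\vee}\cong N$ is equivalent to $N^{\vee}\cong N[\tau+d_0]$ (note $\tau+d_0=\sum_0^n d_i-n-1$, the symmetry centre of Theorem \ref{Sernesi}). So it suffices to produce this latter isomorphism. Applying Grothendieck's local duality (\ref{Grothduality}) with $d=\dim T=1$, $i=0$ and $M=R$ immediately gives $N^{\vee}\cong\Ext^1_T(R,\Omega_T)$, so everything reduces to identifying $\Ext^1_T(R,\Omega_T)$ with $N[\tau+d_0]$.

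First I would record the two ingredients that also underlie the exact sequence. Since $R=T/\bar I$, where $\bar I$ is the image of $I$ in $T$, i.e.\ the principal ideal generated by the class $\bar f_0$ of $f_0$, multiplication by $\bar f_0$ presents $\bar I$ as $T[-d_0]/(0:_T\bar f_0)[-d_0]$. As $H/\bar I\cong N$ is $T_+$-torsion while $T$ has positive depth (hence no $T_+$-torsion), one checks $(0:_T\bar f_0)=(0:_T\bar I)=(0:_T H)=K$, whence $\bar I\cong B[-d_0]$. Together with the linkage isomorphism $H\cong\Hom_T(B,T)$ of Theorem \ref{linkage} and $\Omega_T\cong T[\tau]$, this yields $\Hom_T(B,\Omega_T)\cong H[\tau]$ and, applied to the tautological sequence $0\to\bar I\to H\to N\to 0$ twisted by $\tau$, produces exactly the short exact sequence of the statement.

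Next I would compute $\Ext^1_T(R,\Omega_T)$ from the presentation $0\to\bar I\to T\to R\to 0$. Applying $\Hom_T(-,\Omega_T)$ and using $\Ext^1_T(T,\Omega_T)=0$ gives
$$\Ext^1_T(R,\Omega_T)\cong\operatorname{coker}\bigl(\Omega_T\xrightarrow{\ \mathrm{res}\ }\Hom_T(\bar I,\Omega_T)\bigr).$$
Here $\Hom_T(\bar I,\Omega_T)\cong\Hom_T(B,\Omega_T)[d_0]\cong H[\tau+d_0]$ by the previous step, while the image of $\mathrm{res}$ is $\Omega_T/\Hom_T(R,\Omega_T)=T[\tau]/K[\tau]=B[\tau]$. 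Identifying this copy of $B[\tau]$ with the twisted submodule $\bar I[\tau+d_0]\subseteq H[\tau+d_0]$, the cokernel becomes $(H/\bar I)[\tau+d_0]=N[\tau+d_0]$. This is the same computation that produced the exact sequence above (twisted by $d_0$), which is why the two assertions of the theorem are really one. Combining with the previous paragraph, $N^{\vee}\cong N[\tau+d_0]$, which is the claimed self-duality.

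The main obstacle is the final identification: one must verify that, under the linkage isomorphism $\Hom_T(B,\Omega_T)\cong H[\tau]$, the restriction map $\mathrm{res}$ becomes (up to twist) multiplication by $\bar f_0$, so that its image is precisely the submodule $\bar I\hookrightarrow H$ and not merely some abstract copy of $B$ inside $H[\tau+d_0]$. Equivalently, if one instead dualizes the exact sequence of the statement directly — applying $\Hom_T(-,\Omega_T)$ and invoking local duality $\Ext^1_T(N[\tau],\Omega_T)\cong(N[\tau])^{\vee}$ for the finite-length module $N[\tau]$ — the very same point resurfaces as the need to know that the inclusion $B[\tau-d_0]\hookrightarrow\Hom_T(B,\Omega_T)$ coincides with its own dual under the biduality of Theorem \ref{duality}(3), i.e.\ that the multiplication pairing on these linked modules is symmetric. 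Everything else is formal, resting only on $\Omega_T\cong T[\tau]$, the vanishing of $\Ext^{\ge 1}_T(-,\Omega_T)$ on maximal Cohen--Macaulay modules (Lemma \ref{vanishing}), and Grothendieck duality.
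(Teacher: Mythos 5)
Your argument is essentially correct, and the first half (the construction of the exact sequence) coincides with the paper's up to presentation: you obtain the injectivity of $B[-d_0]\to\Hom_T(B,T)$ purely algebraically, from $(0:_T\overline{f_0})=(0:_T\overline{I})=(0:_TH)=K$ (using that $H/\overline{I}$ is $T_+$-torsion while $\depth T=1$, so $T$ has no $T_+$-torsion), whereas the paper deduces it from the sheaf-theoretic Gherardelli isomorphism together with the fact that the maximal Cohen--Macaulay module $B$ has no submodule supported at $T_+$; your route is cleaner and stays entirely inside graded algebra. The duality step is genuinely different: the paper dualizes the exact sequence of the statement itself, which requires the biduality $\Hom_T(\Hom_T(B,\Omega_T),\Omega_T)\cong B$ of Theorem \ref{duality}(3), the identity $H^0_{T_+}(H^0_{T_+}(R))=H^0_{T_+}(R)$, and finally a ``comparison'' of the original sequence with its dual; you instead apply local duality directly to $R$ and compute $\Ext^1_T(R,\Omega_T)$ from the presentation $0\to\overline{I}\to T\to R\to 0$, which avoids biduality altogether. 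The ``main obstacle'' you flag is real but routine, and you should write it out: the isomorphism $\Hom_T(\overline{I},\Omega_T)\cong H[\tau+d_0]$ obtained by composing $\overline{I}\cong B[-d_0]$ with $\Hom_T(B,T)\cong(0:_TK)=H$ is precisely evaluation at $\overline{f_0}$, and the restriction to $\overline{I}$ of the homomorphism ``multiplication by $t$'', evaluated at $\overline{f_0}$, is $t\overline{f_0}$; hence the image of $\mathrm{res}$ is exactly $\overline{I}[\tau+d_0]$ and the cokernel is $N[\tau+d_0]$, as required. It is worth noting that the mirror image of this point --- that the injection $B[\tau-d_0]\hookrightarrow\Hom_T(B,\Omega_T)$ coincides with its own dual because the underlying pairing $(b,b')\mapsto f_0bb'$ is symmetric --- is exactly what the paper's final ``comparison'' step needs and leaves implicit, so your version is, if anything, the easier one to make fully rigorous.
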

\begin{proof}
First of all we define a morphism $B[-d_0] \rightarrow \Hom _T(B, T)$. Recall that  $f_0\in I\subset I^s$ and $H \cong \Ann _T K$ is the image of $I^s$ in $T$ (cf. Theorem \ref{linkage} and Remark \ref{alsoIfrac}). Then, the multiplication by $f_0$
provides a map 
\begin{equation}\label{noquotient}
 S[-d_0]\stackrel{ f_0}{\longrightarrow} \Hom _T(B, T).
\end{equation}
 On the other hand,  $K \cong \Ann _T H$ implies that the previous morphism
factors through
\begin{equation}
\label{injective} B[-d_0]  \stackrel{ f_0}{\longrightarrow} \Hom _T(B, T)
\end{equation}
We claim that such a map is injective. First of all, we observe that the sheaf-theoretic counterpart of the above morphism is the Gherardelli isomorphism (\ref{sheaftheoreticiso})
(cf. Remark \ref{alsoIfrac} (2)).
Hence, the kernel of (\ref{injective}) must be supported on the maximal graded ideal. On the other hand, Theorem \ref{linkage} entails that $B$ is a
maximal Cohen-Macaulay $T$-module. So $T$ cannot have submodules supported on the maximal  ideal and the Claim is so proved.

Thus we have an injective morphism $B[\tau -d_0] \rightarrow \Hom _T(B, \Omega _T)$ and we denote by $N$ its cokernel. We need to prove that $N$ 
is, up to an appropriate shift, the local cohomology module $H^0_{T_+}(R)$. First of all, we observe that (\ref{noquotient}) and (\ref{injective}) have the same cokernel $N[-\tau]$.
Hence, by taking into account Remark  \ref{alsoIfrac} we have an exact sequence
\begin{equation}\label{longsurj}
S[-d_0]\stackrel{ f_0}{\longrightarrow}  H \longrightarrow N[-\tau] \longrightarrow 0.
\end{equation}
Since $H $ is the image of $I^s$ in $T=S\slash J$, with $J=(f_1, \dots ,f_n)$, we  have also
$$
\bigoplus_0^n S[-d_i]\stackrel{ (f_0, \dots , f_n)}{\longrightarrow}  I^s \longrightarrow N[-\tau] \longrightarrow 0
$$
and we get $N[-\tau]\cong I^s \slash I\cong H^0_{T_+}(R)$, owing to  (\ref{loccohom}).

In order to conclude, we only need to prove that the local cohomology is self-dual. Obviously, the local cohomology module $H^0_{T_+}(R)$ is supported on the maximal ideal
hence $\Hom_T(H^0_{T_+}(R), \Omega _T)=0$ (cf. \cite[Proposition 1.2.3]{BH}).
Applying 
$\Hom _T(\cdot , \Omega _T)$ to the short exact sequence
\begin{equation}
\label{tocompare}
0 \rightarrow B[\tau -d_0] \rightarrow \Hom _T(B, \Omega _T) \rightarrow H^0_{T_+}(R)[\tau] \rightarrow 0
\end{equation}
and taking into account of Lemma \ref{vanishing} (2),
we get 
$$
0 \rightarrow B \rightarrow \Hom _T(B[\tau -d_0], \Omega _T) \rightarrow \Ext ^1 _T (H^0_{T_+}(R)[\tau], \Omega _T) \rightarrow 0.
$$
By \cite[Theorem 3.2.13 and Theorem 3.5.8]{BH}, $H^0_{T_+}(R)$ is an Artinian module so
 \cite[Corollary A 1.5]{Eisenbud2} implies $H^0_{T_+}(H^0_{T_+}(R))=H^0_{T_+}(R)$.  
Then,  Grothendieck's duality theorem (\ref{Grothduality}) entails
$$
(H^i_{T_+}(H^0_{T_+}(R)))^{\vee}=(H^0_{T_+}(R))^{\vee}\cong \Ext ^{1}_T(H^0_{T_+}(R), \Omega_T),
$$
and we get
$$
0 \rightarrow B[\tau -d_0] \rightarrow \Hom _T(B, \Omega _T) \rightarrow 
(H^0_{T_+}(R)[\tau])^{\vee}[\tau -d_0] \rightarrow 0.
$$
By comparison of  (\ref{tocompare}) with the last short exact sequence we conclude that
$$H^0_{T_+}(R)[\tau]\cong (H^0_{T_+}(R)[\tau])^{\vee}[\tau -d_0]\cong (H^0_{T_+}(R)[\tau+d_0])^{\vee}[\tau]$$
and we are done.
\end{proof}

\medskip

\begin{remark}\label{final}
As already observed	in the proof of Theorem \ref{main}, Gherardelli's isomorphism $\omega _{\Theta }\cong \oc _{\Theta }(\tau-d_0)$ is the sheaf-theoretic counterpart of
the morphism $B[\tau -d_0] \hookrightarrow \Hom _T(B, \Omega _T)$. Thus, the first statement of the previous theorem
says that the local cohomology module $H^0_{T_+}(R)$
\textit{measures the failure of Gherardelli's theorem to hold true for the corresponding graded modules}. 
\end{remark}

\bigskip

\end{document}